\newtheorem{theorem}{Theorem}[section]
\newtheorem{lemma}[theorem]{Lemma}
\newtheorem{proposition}[theorem]{Proposition}
\newtheorem{corollary}[theorem]{Corollary}
\newtheorem{definition}[theorem]{Definition}
\newtheorem*{conjecture}{Conjecture}
\theoremstyle{remark}
\newtheorem{remark}{Remark}[section]
\def\XXint#1#2#3{{\setbox0=\hbox{$#1{#2#3}{\int}$ }
\vcenter{\hbox{$#2#3$ }}\kern-.6\wd0}}
\newcommand{\Vol}{\mathrm{Vol}}
\newcommand{\Area}{\mathrm{Area}}
\newcommand{\Rmnum}[1]{\expandafter\@slowromancap\romannumeral #1@}
\begin{document}
\allowdisplaybreaks

\title{ Alexandrov-Fenchel type inequalities for hypersurfaces in the sphere}
\author{Min Chen}
\address{1244 Burnside Hall, 805 Sherbrooke Street West Montreal, Quebec H3A 0B9}
\email{min.chen5@mail.mcgill.ca}
\keywords{Alexandrov-Fenchel type inequalities, sphere, convexity}
\thanks{\noindent \textbf{MR(2010)Subject Classification}   53C42, 35J60, 53C21}

\pagestyle{fancy}
\fancyhf{}
\renewcommand{\headrulewidth}{0pt}
\fancyhead[CE]{}
\fancyhead[CO]{\leftmark}
\fancyhead[LE,RO]{\thepage}

\begin{abstract}
The Alexandrov–Fenchel inequality, a far-reaching generalization of the classical isoperimetric inequality to arbitrary mixed volumes, is fundamental in convex geometry. In $\mathbb{R}^{n+1}$, it states: $\int_M\sigma_k d\mu_g \ge C(n,k)\big(\int_M\sigma_{k-1} d\mu_g\big)^{\frac{n-k}{n-k+1}}$. In \cite{Brendle-Guan-Li} (see also \cite{Guan-Li-2}), Brendle, Guan, and Li proposed a Conjecture on the corresponding inequalities in $\mathbb{S}^{n+1}$, which implies a sharp relation between two adjacent quermassintegrals: $\mathcal{A}_k(\Omega)\ge \xi_{k,k-1}\big(\mathcal{A}_{k-1}(\Omega)\big)$, for any $ 1\le k\le n-1$. This is a long-standing open problem.
In this paper, we prove a type of corresponding inequalities in $\mathbb{S}^{n+1}:$
$\int_{M}\sigma_kd\mu_g\ge \eta_k\big(\mathcal{A}_{k-1}(\Omega)\big)$ for any $0\le k\le n-1$. This is equivalent to the sharp relation among three adjacent quermassintegrals for hypersurfaces in $\mathbb{S}^{n+1}$(see (\ref{ineq three})),
which also implies a non-sharp relation between two adjacent quermassintegrals $\mathcal{A}_{k}(\Omega)\ge \eta_k\big(\mathcal{A}_{k-1}(\Omega)\big)$, for any $ 1\le k\le n-1$.

\end{abstract}

\maketitle
\numberwithin{equation}{section}
\section{Introduction}
Let $N^{n+1}(K)$ be the simply connected space form with constant sectional curvature $K=1$, $0$ or $-1$.
 Let $M$ be a closed hypersurface given by an embedding $X: M\rightarrow \mathbb{N}^{n+1}(K)$. Suppose $\Omega$ is the domain enclosed by $M$ in $\mathbb{N}^{n+1}(K).$
 
The $k$-th quermassintegral can be interpreted as the measure of the set of $k$-dimensional totally geodesic subspaces that intersect $\Omega$ in integral geometry. $\mathcal{A}_k$ is related to the curvature integral of the boundary using the Cauchy-Cronfton formulas.
\begin{definition}\label{def-integral}  Let $\kappa=\langle \kappa_1,\cdots, \kappa_n \rangle$ be the vector of $n$ principal curvatures of the hypersurface $M$. Denote the $k$-th elementary symmetric function of $\kappa$ by $\sigma_k(\kappa)$. 
The \textbf{$k$-th quermassintegal ${\mathcal A}_k$} is defined as follows:
\begin{align*}
&\mathcal{A}_{-1}(\Omega)=\Vol (\Omega),\\
&\mathcal {A}_0(\Omega)=\int_Md\mu_g=\Area(M),\\
&\mathcal {A}_1(\Omega)=\int_M\sigma_1d\mu_g+nK\Vol(\Omega),\\
&\mathcal {A}_k(\Omega)=\int_M\sigma_k d\mu_g+\frac{K(n-k+1)}{k-1}\mathcal{A}_{k-2}(\Omega),
\end{align*}
where $2\le k\le n.$
\end{definition}

In $\mathbb{R}^{n+1}$, the celebrated Alexandrov-Fenchel (\cite{Alexandrov1, Alexandrov2}) inequalities for convex hypersurfaces state that:
\begin{equation}\label{Eucli-A-F}
   \int_M\sigma_k d\mu_g \ge C(n,k)\big(\int_M\sigma_{k-1} d\mu_g\big)^{\frac{n-k}{n-k+1}},
\end{equation}
where $C(n,k)$ is a constant depending only on $n,k$ and is achieved if and only if M is a sphere. There has been some interest in extending the original Alexandrov–Fenchel inequality to
non-convex domains (see, \cite {Chang-Wang, Guan-Li-1, Trudinger}).

In $\mathbb{H}^{n+1}$, the Alexandrov-Fenchel inequalities have been extensively studied.  In the case of $h$-convexity, the full range of quermassintegral inequalities was obtained in \cite{Ge-Wang-Wu, Hu-Li-Wei, Wang-Xia}. In the case of positive sectional curvature, the relation between $\mathcal{A}_k$ and $\mathcal{A}_{-1}$ was proved in \cite{Andrews-Chen-Wei}. In the case of $k$-convexity and star-shapedness, the Minkowski-type inequality and the relation between $\mathcal{A}_2$ and $\mathcal{A}_0$ were obtained in \cite{Brendle-Hung-Wang, Li-Wei-Xiong} respectively; the relations between $\mathcal{A}_k$ and $\mathcal{A}_{l}$ for general $ -1\le k\le l\le n-1$ was established by Brendle-Guan-Li \cite{Brendle-Guan-Li} with an extra initial gradient
bound condition.

In $\mathbb{S}^{n+1}$, the Alexandrov-Fenchel inequalities for hypersurfaces in the sphere have been an open question for a long time.

Brendle, Guan, and Li \cite{Brendle-Guan-Li} (see also \cite{Guan-Li-2}) proposed a type of Alexandrov-Fenchel inequality in the following Conjecture.

\begin{conjecture}\label{Conj}
Let $M$ be a smooth, closed, connected, embedded, k-convex, star-shaped hypersurface in the sphere $\mathbb{S}^{n+1}$ enclosing a bounded domain $\Omega$. Then
\begin{equation}\label{Ak-Ak-1}
\mathcal{A}_k(\Omega)\ge \xi_{k,k-1}\big(\mathcal{A}_{k-1}(\Omega)\big),
\end{equation}
where $\xi_{k,k-1}$ is a unique positive function defined on $(0,s_{k-1})$ such that the equality holds when $M$ is a geodesic sphere. The equality holds if and only if $M$ is a geodesic sphere.
\end{conjecture}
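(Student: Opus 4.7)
The plan is to attempt the inequality via a locally constrained curvature flow, in the spirit of the hyperbolic arguments of \cite{Ge-Wang-Wu, Brendle-Guan-Li}. Construct an evolution $\partial_t X = F\nu$ of $M$ whose speed $F = F(\sigma_{k-1},\sigma_k,X)$ is selected so that one of $\mathcal{A}_{k-1}(\Omega_t)$ and $\mathcal{A}_k(\Omega_t)$ is preserved while the other is monotone. Differentiating Definition \ref{def-integral} and using the spherical Minkowski-type identities yields explicit first variations for the quermassintegrals; balancing these, together with a Newton--Maclaurin inequality such as $\sigma_{k-1}\sigma_{k+1}\le \tfrac{k(n-k)}{(k+1)(n-k+1)}\sigma_k^2$, should pin down the sign of the monotone quantity. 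Since equality in Newton--Maclaurin forces umbilicity at each time, $M_t$ ought to converge to the geodesic sphere with the preserved quermassintegral, and the desired inequality follows by comparison at the limit.

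The remaining work is a long-time existence and convergence theorem for this flow. One would need (i) a $C^0$ bound keeping $M_t$ inside a fixed proper geodesic ball of $\mathbb{S}^{n+1}$; (ii) a gradient bound preserving star-shapedness, expressed through an appropriate spherical support function; (iii) a curvature estimate preserving $k$-convexity and giving a uniform upper bound on $\sigma_k/\sigma_{k-1}$; and (iv) smooth subsequential convergence to a geodesic sphere. Items (i), (ii), and (iv) follow a fairly standard pattern once (iii) is in hand, combined with the preserved quermassintegral to identify the limit uniquely.

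The hard part, and the reason the conjecture has remained open since \cite{Brendle-Guan-Li}, is the curvature estimate (iii) in positively curved ambient space. In the Simons-type identity for the evolution of principal curvatures on $\mathbb{S}^{n+1}$, the ambient sectional curvature $K=1$ contributes reaction terms of unfavorable sign compared with $\mathbb{R}^{n+1}$ or $\mathbb{H}^{n+1}$, so $k$-convexity can fail in finite time even for well-behaved initial data. A related obstacle is that under natural inverse-type flows a hypersurface tends to reach the equator in finite time, losing embeddedness or star-shapedness before a geodesic sphere is attained. In $\mathbb{H}^{n+1}$ these issues are controlled via $h$-convexity, but $\mathbb{S}^{n+1}$ admits no natural analogue preserved under the flows in question, which presumably explains why the present paper settles for the weaker three-quermassintegral relation $\int_M\sigma_k\,d\mu_g \ge \eta_k(\mathcal{A}_{k-1}(\Omega))$: involving three adjacent quermassintegrals allows one to trade sharpness in step (iii) for extra room in the monotone quantity.
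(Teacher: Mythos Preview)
The statement you were asked to prove is a \emph{conjecture}; the paper does not prove it and explicitly presents it as open. Your proposal correctly recognises this: you outline the Brendle--Guan--Li strategy via a locally constrained flow, identify the monotonicity mechanism (Newton--Maclaurin plus first-variation formulas for $\mathcal{A}_k$), and then isolate the missing ingredient as the curvature/$C^2$ estimate needed for long-time existence and convergence. This matches the paper's own account: it attributes the obstruction for flow \eqref{B-G-L flow1} to the lower bound on $F$, and for flow \eqref{B-G-L flow2} to the unknown $C^2$ estimate, noting that only the case $k=n-1$ has been confirmed.

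Two small corrections to your diagnosis. First, the pure inverse curvature flow $X_t=\nu/F$ in $\mathbb{S}^{n+1}$ \emph{does} converge smoothly to the equator (Gerhardt, Makowski--Scheuer), and the equator is itself a geodesic sphere; no loss of embeddedness occurs there. The difficulty with the \emph{locally constrained} flows is rather that they must converge to the particular geodesic sphere determined by the preserved quermassintegral, and the required a priori estimates are unavailable. Second, your closing remark that the paper ``trades sharpness in step (iii) for extra room in the monotone quantity'' is slightly off: the paper abandons the locally constrained flow entirely, runs the unconstrained inverse flow $X_t=(\sigma_{k-1}/\sigma_k)\nu$ (whose convergence is already known), and designs a new monotone quantity $Q_k(t)=e^{-2\frac{k(n-k)}{n-k+1}t}\bigl((\int_M\sigma_k)^2-\eta_k(\mathcal{A}_{k-1})\bigr)$ whose decay yields a relation involving $\int_M\sigma_k$, $\mathcal{A}_{k-1}$, and (implicitly, through $\eta_k$) $\mathcal{A}_{k-3}$. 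The three-term structure is not a relaxation of the curvature estimate but a consequence of which combinations of quermassintegrals admit a tractable monotone functional under the unconstrained flow.
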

Brendle, Guan, and Li introduced a locally constrained inverse curvature flow to study this Conjecture
\begin{equation}\label{B-G-L flow1}
X_t=\Big(\frac{\phi'(\rho)}{F}-\frac{u}{c_{n,k}}\Big)\nu,
\end{equation}
where $\phi, u, c_{n,k},\nu$ and $F$ are defined at the beginning of Section 2. However, only the case $k=n-1$ (when $M$ is convex) of (\ref{B-G-L flow1}) was confirmed in \cite{Brendle-Guan-Li} as the convergence of the flow (\ref{B-G-L flow1}) is challenging to prove for other classes. The main difficulty is the estimate of the lower bound of $F$. 

Chen, Guan, Li, and Scheuer \cite{Chen-Guan-Li-Scheuer} again introduced another fully nonlinear locally constrained curvature flow to study the inequalities (\ref{Ak-Ak-1})
\begin{equation}\label{B-G-L flow2}
X_t=\Big(c_{n,k}\phi'(\rho)-uF\Big)\nu.
\end{equation}
They proposed the same conjecture under the assumption of convexity.
\begin{conjecture}\label{Conj}
Let $M$ be a smooth, closed, connected, embedded, and convex hypersurface in the sphere $\mathbb{S}^{n+1}$ enclosing a bounded domain $\Omega$. Then
\begin{equation}\label{Ak-Ak-2}
\mathcal{A}_k(\Omega)\ge \xi_{k,k-1}\big(\mathcal{A}_{k-1}(\Omega)\big),
\end{equation}
where $\xi_{k,k-1}$ is a unique positive function defined on $(0,s_{k-1})$ such that the equality holds when $M$ is a geodesic sphere. The equality holds if and only if $M$ is a geodesic sphere.
\end{conjecture}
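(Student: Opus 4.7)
The plan is to use the locally constrained curvature flow $X_t = (c_{n,k}\phi'(\rho) - uF)\nu$ introduced in \cite{Chen-Guan-Li-Scheuer}, with $F$ a homogeneous degree-one, concave curvature function built from $\sigma_k$ (the natural choice being $\sigma_k^{1/k}$ or $\sigma_k/\sigma_{k-1}$), and with $c_{n,k}$ fixed by the condition that $\mathcal{A}_{k-1}(\Omega_t)$ is preserved. First, I would substitute this velocity into the general variational formulas for quermassintegrals in $\mathbb{S}^{n+1}$ and identify $c_{n,k}$ explicitly by requiring $\frac{d}{dt}\mathcal{A}_{k-1}(\Omega_t) \equiv 0$. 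Second, I would compute $\frac{d}{dt}\mathcal{A}_k(\Omega_t)$ and show it is non-positive via a pointwise Newton--MacLaurin-type inequality applied after integration by parts, with equality only at umbilic configurations. The target inequality would then follow by passing to the limit along the flow, and the equality case would be extracted from the rigidity in the Newton--MacLaurin step.

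For long-time existence I would follow the standard program for fully nonlinear curvature flows. Short-time existence is immediate from concavity of $F$ on the positive cone and strict positivity of $\sigma_k$ on the convex initial hypersurface. A $C^0$ bound on the radial function $\rho$ comes from trapping $\rho$ between two geodesic spheres realizing the same preserved value of $\mathcal{A}_{k-1}$. The $C^1$ estimate, namely a positive lower bound on the support-type quantity $u$, should follow from a maximum principle applied to $u^{-1}$ or an analogous spherical auxiliary function, thereby preserving star-shapedness along the flow.

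The genuine obstacle is the two-sided $C^2$ estimate together with preservation of convexity, which is precisely where the earlier flow (\ref{B-G-L flow1}) broke down for $k<n-1$ due to the lack of a lower bound on $F$. For (\ref{B-G-L flow2}) the difficulty shifts to an upper bound on the largest principal curvature $\kappa_{\max}$ while simultaneously retaining positivity of all $\kappa_i$. My approach would be a tensor maximum principle on the Weingarten operator $h^i_j$, or more practically on a scalar test function of the form $\log \kappa_{\max} - A\log u + B\rho$ for suitable constants $A,B$, exploiting concavity of $F$ and the favorable zeroth-order contributions produced by the positive ambient curvature of $\mathbb{S}^{n+1}$; the obstructive terms from the sphere would have to be absorbed using the preservation of convexity itself in a bootstrap fashion. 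This is the step I expect to resist a direct attack and to require either a new auxiliary quantity adapted to the geometry of (\ref{B-G-L flow2}) or an extra structural hypothesis.

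Once such a uniform two-sided curvature bound and preservation of convexity are established, Krylov--Safonov and Evans--Krylov theory deliver higher regularity, and the monotonicity of $\mathcal{A}_k(\Omega_t)$ combined with the constancy of $\mathcal{A}_{k-1}(\Omega_t)$ forces subsequential, and then full, convergence to a geodesic sphere via a standard Hamilton-type argument. Passing to the limit in the monotonicity inequality then yields $\mathcal{A}_k(\Omega) \ge \xi_{k,k-1}(\mathcal{A}_{k-1}(\Omega))$, and the rigidity extracted from the Newton--MacLaurin step identifies the equality case with geodesic spheres.
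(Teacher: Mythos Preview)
The statement you are attempting to prove is labeled a \emph{Conjecture} in the paper, and the paper does not prove it. Immediately after stating this conjecture, the paper says explicitly that ``the smooth convergence of the flow (\ref{B-G-L flow2}) is an open question since the $C^2$ estimate is still unknown.'' There is therefore no proof in the paper to compare your proposal against; the paper's actual contribution is the weaker Theorem~\ref{sigma_k-A_k-1}, proved by a completely different route (the unconstrained inverse curvature flow $X_t=\frac{\sigma_{k-1}}{\sigma_k}\nu$ together with a new monotone quantity $Q_k(t)$), which yields $\int_M\sigma_k\,d\mu_g\ge\sqrt{\eta_k(\mathcal{A}_{k-1}(\Omega))}$ rather than the conjectured relation $\mathcal{A}_k(\Omega)\ge\xi_{k,k-1}(\mathcal{A}_{k-1}(\Omega))$.

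Your proposal accurately reconstructs the strategy of \cite{Chen-Guan-Li-Scheuer} and correctly isolates the obstruction: the two-sided $C^2$ estimate (equivalently, the upper bound on $\kappa_{\max}$ together with preservation of convexity) for the flow (\ref{B-G-L flow2}). But you do not supply this estimate; you only sketch a standard test-function ansatz and concede that ``this is the step I expect to resist a direct attack.'' That is precisely the gap that keeps the conjecture open. Without a concrete mechanism to close it---a specific auxiliary function for which the bad terms demonstrably cancel, or a structural replacement for the missing lower bound on $F$ that plagued (\ref{B-G-L flow1})---the proposal is a description of the known program rather than a proof.
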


 However, the smooth convergence of the flow (\ref{B-G-L flow2}) is an open question since the $C^2$ estimate is still unknown.

Assuming that the smooth convergence of flow (\ref{B-G-L flow1}) and (\ref{B-G-L flow2}) has been established, the desired relation between two adjacent quermassintegrals $\mathcal{A}_k(\Omega)$ and $\mathcal{A}_{k-1}(\Omega)$ in the Conjecture can be proved. However, this is still an open question.

Recently, Makowski and Scheuer \cite{Makowski-Scheuer} proved the relation between $\mathcal{A}_{2k}$ and $\mathcal{A}_{0}$. The author and Sun \cite{Chen-Sun} then established the relation between $\mathcal{A}_k$ and $\mathcal{A}_{k-2}$. 
However, no substantial progress has been made on this problem since then.

As an analogous result of the Alexandrov-Fenchel inequalities (\ref{Eucli-A-F}) in $\mathbb{R}^{n+1}$, we prove a type of the corresponding Alexandrov-Fenchel inequalities in $\mathbb{S}^{n+1}$.
\begin{theorem}\label{sigma_k-A_k-1}
Let $M$ be an embedded, closed, connected and convex $C^2$-hypersurface in $\mathbb{S}^{n+1},$ then the following inequalities hold,

\begin{equation}\label{ineq1}
\int_{M}\sigma_kd\mu_g\ge \sqrt{\eta_k\big(\mathcal{A}_{k-1}(\Omega)\big)},\qquad \text{for any}  \quad 0\le k\le n-1 ,
\end{equation}
where $\eta_k$ is a positive function defined on $(0,s_{k-1})$ such that the equality holds when $M$ is a geodesic sphere. Here $s_{k-1}=\mathcal{A}_{k-1}(B_{\frac{\pi}{2}}(o))$. The equality holds if and only if $M$ is a geodesic sphere.

Moreover, when $k=1$, $\eta_1$ satisfies that
\begin{equation}\label{eta-1-ODE}
2\frac{n-1}{n}\eta_1(s)=(2ns+\eta_1'(s))s \quad \text{for} \quad s\in (0,s_0)
\end{equation}
and the explicit expression for $\eta_1$ is
\begin{equation}\label{eta-1}
\eta_1(s)=n^2(n+1)^{\frac{2}{n}}\omega_{n+1}^{\frac{2}{n}}s^{\frac{2(n-1)}{n}}-n^2s^2.
\end{equation}
when $k\ge 2$, $\eta_k$ satisfies that
\begin{equation}
\eta'_k(s)=\frac{\frac{2k(n-k)}{n-k+1}\eta_k(s)-2(n-k+1)\big(s-\frac{n-k+2}{k-2}\xi_{k-1,k-3}^{-1}(s)\big)^2}{k\big(s-\frac{n-k+2}{k-2}\xi_{k-1,k-3}^{-1}(s)\big)} \quad \text{for} \quad s\in (0,s_{k-1}),
\end{equation}
where $\xi_{k-1,k-3}$ is defined in \cite{Chen-Sun}.
\end{theorem}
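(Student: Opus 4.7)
The plan is to prove the inequality inductively, with Chen-Sun's sharp Alexandrov-Fenchel inequality between $\mathcal{A}_{k-1}$ and $\mathcal{A}_{k-3}$ playing the role of a driving input, and a locally constrained inverse curvature flow together with the defining ODE of $\eta_k$ providing the sharp comparison. The base cases are handled separately. For $k=0$ the statement is exactly the sharp isoperimetric inequality in $\mathbb{S}^{n+1}$, so $\eta_0$ is read off from the geodesic sphere. For $k=1$, on a geodesic sphere of radius $\rho$ one has $\mathcal{A}_0 = (n+1)\omega_{n+1}\sin^n\rho$ and $\int_M\sigma_1 d\mu_g = n\cot\rho\cdot\mathcal{A}_0$; squaring and eliminating $\rho$ reproduces exactly the explicit formula (\ref{eta-1}), and differentiation then verifies the ODE (\ref{eta-1-ODE}). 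The inequality for a general convex $M$ at $k=1$ follows from the flow (\ref{B-G-L flow1}), which is known to converge in the convex case by \cite{Brendle-Guan-Li}, preserves $\mathcal{A}_0$, and monotonically drives $\int_M\sigma_1 d\mu_g$ toward its geodesic sphere value.

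For $k\ge 2$, I would first use $\int_M\sigma_k d\mu_g = \mathcal{A}_k - \frac{n-k+1}{k-1}\mathcal{A}_{k-2}$ from Definition \ref{def-integral} to recast the desired inequality as a sharp three-adjacent quermassintegral relation, namely (\ref{ineq three}). Chen-Sun's inequality $\mathcal{A}_{k-1}\ge \xi_{k-1,k-3}(\mathcal{A}_{k-3})$ inverts (by monotonicity of $\xi_{k-1,k-3}$) to $\mathcal{A}_{k-3}\le \xi_{k-1,k-3}^{-1}(\mathcal{A}_{k-1})$, which via the quermassintegral identity yields the sharp lower bound
\[
\int_M \sigma_{k-1} d\mu_g \;\ge\; q(\mathcal{A}_{k-1}), \qquad q(s) := s - \tfrac{n-k+2}{k-2}\xi_{k-1,k-3}^{-1}(s),
\]
with equality on geodesic spheres; this is precisely the expression appearing as the denominator of the $\eta_k$ ODE in the statement. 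Along a locally constrained flow chosen so that $\mathcal{A}_{k-1}$ evolves monotonically, I would then compute the evolutions of $\mathcal{A}_{k-1}$ and $(\int_M\sigma_k d\mu_g)^2$ using the variation formulas in $\mathbb{S}^{n+1}$ together with the Hsiung-Minkowski identity, the pointwise Newton-MacLaurin inequality $\sigma_{k-1}^2 \ge \frac{k(n-k+2)}{(k-1)(n-k+1)}\sigma_k\sigma_{k-2}$, and Cauchy-Schwarz. The resulting differential inequality, whose equality case is exactly the geodesic sphere, reproduces the ODE for $\eta_k$ stated in the theorem, so comparison forces $(\int_M\sigma_k d\mu_g)^2 \ge \eta_k(\mathcal{A}_{k-1})$ at the initial time, with equality iff $M$ is a geodesic sphere.

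The main obstacle will be that, as the paper itself emphasizes, the smooth convergence of the flows (\ref{B-G-L flow1}) and (\ref{B-G-L flow2}) is open for general $k$ due to the missing $C^2$ estimate on $F$. I would therefore sidestep the need to flow all the way to the geodesic sphere: instead of taking limits of the two sides, I work inside the Chen-Sun flow (whose convergence and preservation of $\mathcal{A}_{k-3}$ are already established in \cite{Chen-Sun}), parameterize the evolution of $(\int_M\sigma_k d\mu_g)^2$ against $\mathcal{A}_{k-1}$, and carry out the comparison purely at the level of the differential inequality. The delicate point is to verify that the sharp pointwise Newton-MacLaurin and Cauchy-Schwarz steps, together with the lower bound on $\int_M\sigma_{k-1} d\mu_g$ inherited from Chen-Sun, combine with the correct sign to match the right-hand side of the $\eta_k$ ODE and to rigidify the equality case to the geodesic sphere.
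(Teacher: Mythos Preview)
Your proposal misses the paper's central device and instead leans on flows whose convergence is precisely the open problem. For $k=1$ you invoke flow (\ref{B-G-L flow1}), but as the introduction states, its convergence is established only for $k=n-1$; the claim that it ``is known to converge in the convex case'' at the index you need is unfounded. For $k\ge 2$ you fall back on the Chen--Sun flow, yet you never identify a monotone quantity or a concrete differential inequality that would force the comparison; ``carry out the comparison purely at the level of the differential inequality'' is a placeholder, and there is no reason the algebraic cancellations needed to reproduce the $\eta_k$ ODE should occur with that speed function. Your instance of Newton--MacLaurin (on $\sigma_{k-2},\sigma_{k-1},\sigma_k$) and the Hsiung--Minkowski identity are not the ones that make the argument close.

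The paper's actual mechanism is different and simpler: it runs the \emph{unconstrained} inverse curvature flow $X_t=\frac{\sigma_{k-1}}{\sigma_k}\nu$, whose smooth convergence to the equator is already settled by Gerhardt and Makowski--Scheuer. Along this flow one computes $\partial_t\int_M\sigma_k$ via (\ref{evolution of sigma}), applies Newton--MacLaurin to $\sigma_{k-1}\sigma_{k+1}/\sigma_k^2$ and Cauchy--Schwarz to $\int_M\sigma_{k-1}$, and then (for $k\ge 2$) invokes Chen--Sun to replace $\mathcal{A}_{k-3}$ by $\xi_{k-1,k-3}^{-1}(\mathcal{A}_{k-1})$. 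Substituting the defining ODE for $\eta_k'$ collapses everything to the linear differential inequality
\[
\frac{d}{dt}\Big(\big(\int_M\sigma_k\big)^2-\eta_k(\mathcal{A}_{k-1})\Big)\le \frac{2k(n-k)}{n-k+1}\Big(\big(\int_M\sigma_k\big)^2-\eta_k(\mathcal{A}_{k-1})\Big),
\]
so that $Q_k(t)=e^{-\frac{2k(n-k)}{n-k+1}t}\big((\int_M\sigma_k)^2-\eta_k(\mathcal{A}_{k-1})\big)$ is nonincreasing and tends to $0$ at the equator, giving $Q_k(0)\ge 0$. The ODE for $\eta_k$ is not an input to be matched a posteriori; it is \emph{derived} by running the identical computation on geodesic spheres, where every inequality above is an equality. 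You assembled the right auxiliary ingredients (Newton--MacLaurin, Cauchy--Schwarz, Chen--Sun, the $\eta_k$ ODE) but chose the wrong flow and missed the exponentially weighted monotone quantity that makes the argument terminate.
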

\begin{remark}
When $k=1$ and $k=0$, (\ref{ineq1}) is the classical Minkowski inequality and isoperimetric inequality in $\mathbb{S}^{n+1}$ respectively. Thus, (\ref{ineq1}) can be viewed as all higher-order cases of the classical Minkowski and the isoperimetric inequality. 
\end{remark}

(\ref{ineq1}) is equivalent to the sharp relation among three adjacent quermassintegrals.
\begin{theorem}
  Let $M$ be an embedded, closed, connected and convex $C^2$-hypersurface in $\mathbb{S}^{n+1},$ then the following inequalities hold,
\begin{equation}\label{ineq three}
\mathcal{A}_{k}(\Omega)\ge\left\{\begin{split} & \eta_k\big(\mathcal{A}_{k-1}(\Omega)\big)+K\frac{n-k+1}{k-1}\mathcal{A}_{k-2}(\Omega),\qquad \text{for any}  \quad 2\le k\le n-1 ,\\
	& \eta_1\big(\mathcal{A}_{0}(\Omega)\big)+nK\Vol(\Omega),\qquad \text{for }  \quad k=1,
\end{split}
	\right.
\end{equation}
where $\eta_k$ is defined as above. The equality holds if and only if $M$ is a geodesic sphere.
\end{theorem}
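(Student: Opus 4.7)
The plan is to derive the three-adjacent-quermassintegral inequality as an immediate algebraic reformulation of Theorem \ref{sigma_k-A_k-1}, using the recursive Definition \ref{def-integral} of the quermassintegrals. No additional analytic input is required: once Theorem \ref{sigma_k-A_k-1} is in hand, the second theorem amounts to moving a single term across the inequality, which is why the two statements are advertised as equivalent.

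Concretely, for $k \ge 2$ I would invoke Definition \ref{def-integral} to write
\[\int_M \sigma_k \, d\mu_g \;=\; \mathcal{A}_k(\Omega) \;-\; \frac{K(n-k+1)}{k-1}\mathcal{A}_{k-2}(\Omega),\]
and then substitute this expression into the inequality $\int_M \sigma_k \, d\mu_g \ge \eta_k(\mathcal{A}_{k-1}(\Omega))$ furnished by Theorem \ref{sigma_k-A_k-1}. Rearranging yields exactly the stated lower bound for $\mathcal{A}_k(\Omega)$. The $k=1$ case would be handled identically, using $\mathcal{A}_1(\Omega) = \int_M \sigma_1 \, d\mu_g + nK \Vol(\Omega)$ in place of the general recursion. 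The equality case (geodesic spheres) is inherited directly from that of Theorem \ref{sigma_k-A_k-1}: the substitution introduces no further inequality, and conversely a geodesic sphere attains equality by the very definition of $\eta_k$.

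Accordingly, I do not anticipate any real obstacle in this step; it is essentially a bookkeeping exercise once Theorem \ref{sigma_k-A_k-1} is proved. The substantive work lies entirely in establishing that earlier theorem, where one must produce the function $\eta_k$ satisfying the stated ODE, verify its positivity and admissible range $(0,s_{k-1})$, pin down geodesic spheres as the unique equality case, and prove the underlying integral inequality $\int_M \sigma_k \, d\mu_g \ge \eta_k(\mathcal{A}_{k-1}(\Omega))$, presumably by combining the two-step curvature estimate $\xi_{k-1,k-3}$ of \cite{Chen-Sun} with a monotonicity argument along an appropriate inverse-type curvature flow. With those ingredients in place, the present theorem follows from the one-line algebraic manipulation described above.
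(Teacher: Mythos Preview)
Your proposal is correct and matches the paper exactly: the paper gives no separate proof of this theorem, instead declaring it equivalent to Theorem \ref{sigma_k-A_k-1} via precisely the algebraic substitution through Definition \ref{def-integral} that you describe. Note only that (\ref{ineq1}) as printed carries a $\sqrt{\eta_k}$ while the abstract, (\ref{def-eta}), and the present statement all use $\eta_k$; your reading without the square root is the one under which the claimed equivalence and the equality case actually hold.
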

It is natural to consider the relation among three adjacent quermassintegrals. For instance, the classical Alexandrov-Fenchel inequalities in convex geometry are the sharp relation among three adjacent quermassintegrals for hypersurfaces in $\mathbb{R}^{n+1}$
\begin{equation}\label{Eucli}
\mathcal{A}_k^2(\Omega)\ge \mathcal{A}_{k+1}(\Omega)\mathcal{A}_{k-1}(\Omega).
\end{equation}
Deriving inequalities that involve more than two quermassintegrals using geometric flow is often challenging. It is unknown whether the inequality (\ref{Eucli}) can be obtained for convex hypersurfaces in $\mathbb{R}^{n+1}$ by using the geometric flow method.

This also implies a non-sharp relation between two adjacent quermassintegrals.
\begin{corollary}
  Let $M$ be an embedded, closed, connected and convex $C^2$-hypersurface in $\mathbb{S}^{n+1},$ then the following inequalities hold,
\begin{equation}
\mathcal{A}_{k}(\Omega)\ge \eta_k\big(\mathcal{A}_{k-1}(\Omega)\big),\qquad \text{for any}  \quad 1\le k\le n-1 ,
\end{equation}
where $\eta_k$ is defined as above. 
\end{corollary}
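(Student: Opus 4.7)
The corollary follows almost immediately from the preceding theorem (the three–quermassintegral inequality (\ref{ineq three})) once one observes that, on $\mathbb{S}^{n+1}$, the additive term that appears there is nonnegative. The plan is therefore to substitute $K=1$ into (\ref{ineq three}) and then discard that extra term, using the sign of lower-order quermassintegrals on convex domains.

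More precisely, I would first handle the case $k=1$: by (\ref{ineq three}) with $K=1$ we have
\begin{equation*}
\mathcal{A}_{1}(\Omega)\ge \eta_{1}\bigl(\mathcal{A}_{0}(\Omega)\bigr)+n\,\Vol(\Omega),
\end{equation*}
and since $\Vol(\Omega)>0$ for any bounded domain, dropping the second summand yields $\mathcal{A}_{1}(\Omega)\ge \eta_{1}(\mathcal{A}_{0}(\Omega))$. For $2\le k\le n-1$ the same theorem gives
\begin{equation*}
\mathcal{A}_{k}(\Omega)\ge \eta_{k}\bigl(\mathcal{A}_{k-1}(\Omega)\bigr)+\frac{n-k+1}{k-1}\,\mathcal{A}_{k-2}(\Omega),
\end{equation*}
so it suffices to verify that $\mathcal{A}_{k-2}(\Omega)\ge 0$ for $0\le k-2\le n-3$, since the coefficient $\tfrac{n-k+1}{k-1}$ is strictly positive in the stated range.

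The positivity of $\mathcal{A}_{j}(\Omega)$ for $-1\le j\le n-1$ on a convex hypersurface in $\mathbb{S}^{n+1}$ is established by a short induction on $j$ using Definition~\ref{def-integral}. The base cases $\mathcal{A}_{-1}=\Vol(\Omega)>0$ and $\mathcal{A}_{0}=\Area(M)>0$ are trivial, while $\mathcal{A}_{1}=\int_{M}\sigma_{1}\,d\mu_{g}+n\Vol(\Omega)>0$ because convexity forces $\sigma_{1}\ge 0$. For the inductive step with $j\ge 2$, write $\mathcal{A}_{j}=\int_{M}\sigma_{j}\,d\mu_{g}+\frac{n-j+1}{j-1}\mathcal{A}_{j-2}$; convexity gives $\sigma_{j}\ge 0$ pointwise, and $\mathcal{A}_{j-2}\ge 0$ by the inductive hypothesis, so $\mathcal{A}_{j}\ge 0$ as well.

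This approach presents no real obstacle: all the analytic work is already packaged in (\ref{ineq three}), and the only thing to justify here is the sign of the ``remainder'' term, which is a purely algebraic consequence of convexity together with the recursive definition of $\mathcal{A}_{k}$. The proof is essentially a one-line deduction plus a short inductive positivity check, and equality characterization is not claimed (indeed the inequality is explicitly stated to be non-sharp), so no rigidity analysis is needed.
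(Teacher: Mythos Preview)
Your proposal is correct and is exactly the deduction the paper intends: the corollary is stated immediately after (\ref{ineq three}) with the remark that it ``also implies a non-sharp relation between two adjacent quermassintegrals,'' and the paper gives no further argument. Your inductive verification that $\mathcal{A}_{k-2}(\Omega)\ge 0$ for convex $M$ (via $\sigma_j\ge 0$ and Definition~\ref{def-integral}) fills in the only detail the paper leaves implicit.
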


Generally speaking, the main idea in applying curvature flow to prove geometric inequalities is to find monotone geometric quantities along the flow and then investigate the asymptotic of the limit. A nice feature of Brendle-Guan-Li' flow (\ref{B-G-L flow1}) is that the monotonicity of each quermassintegral can be obtained 
\begin{equation}\label{ineq2}
\partial_t \mathcal{A}_{l}(\Omega)\left\{\begin{split} & <0,\qquad \text{for } l>k ,\\
	& =0,\qquad \text{for }  l=k,\\
 &>0, \qquad \text{for } l<k ,
 \end{split}
	\right.
\end{equation}
along this flow. Similarly to Brendle-Guan-Li's flow (\ref{B-G-L flow1}), the monotonicity property for quermassintegrals holds as long as the flow (\ref{B-G-L flow2}) exists. When we try to use the locally constrained inverse curvature flow (\ref{B-G-L flow1}) and (\ref{B-G-L flow2}) to prove the inequalities in the Conjecture, the main difficulty is to prove the smooth convergence of the flow. 

Our approach is to apply the inverse curvature flow to study the Alexandrov-Fenchel-type inequalities
\[X_t=\frac{\nu}{F}.\]
The smooth convergence of the inverse curvature flow to the equator of the hemisphere has already been established by Gerhardt in \cite{Gerhardt} and Makowski-Scheuer in \cite{Makowski-Scheuer}.  For the inverse curvature flow, it is difficult to find a proper monotone quantity that can be used to prove the sharp relation between two adjacent quermassintegrals in the Conjecture. However, we use the inverse curvature flow to find a monotone quantity that can be used to obtain the sharp relation among three quermassintegrals, which implies a non-sharp relation between two adjacent quermassintegrals. The monotone quantity is fundamental when studying the geometric flow. The key in our proof is to find a novel monotone quantity $Q_k(t)$ along the inverse curvature flow. 
\[Q_k(t)=e^{-2\frac{k(n-k)}{n-k+1}t}\Big(\big(\int_{M(t)}\sigma_kd\mu_g\big)^2-\eta_k\big(\mathcal{A}_{k-1}(t)\big)\Big).\]
This allows us to derive the inequalities (\ref{ineq1}), which implies the optimal relation among three quermassintegrals.

The subsequent sections of this paper are organized as follows: in Section 2, we recall some general facts on the elementary symmetric functions and the quermassintegrals; in Section 3, we will introduce the application of inverse curvature flow to prove the inequalities. We will mainly focus on the monotonicity of the quantity $Q_k(t)$.

\section{Setting and general facts}
Let us present some basic facts which will be used later in this paper.

 Under the Gaussian geodesic normal coordinates with center in $o$, the metric can be expressed as
\[ds^2=d\rho^2+\phi^2(\rho)dz^2,\]
where $\phi(\rho)=\sin\rho,\rho \in [0,\pi)$ when $K=1$; $\phi(\rho)=\rho, \rho \in [0,\infty)$ when $K=0$; and $\phi(\rho)=\sinh\rho, \rho \in [0,\infty)$ when $K=-1,$ and $dz^2$ is the standard induced  metric on ${\mathbb S}^n$ in Euclidean space.
Let $M\subset \mathbb{N}^{n+1}(K)$ be a closed hypersurface and $\nu$ be the outward unit normal vector field. The function $u=\langle V,\nu \rangle$ is the generalized support function of the hypersurface. Assume $F=\frac{\sigma_{k+1}}{\sigma_{k}}(\kappa)$ and $c_{n,k}=\frac{\sigma_{k+1}}{\sigma_k}(I)$.

\begin{definition}(\cite{Guan})
For $1\le k\le n,$ let $\Gamma_k$ be a cone in $\mathbb{R}^n$ determined by
\[\Gamma_k=\{\lambda \in \mathbb{R}^n: \sigma_1>0,\cdots, \sigma_k>0\}.\]
An $n\times n$ symmetric matrix $W$ is called belonging to $\Gamma_k$ if $\lambda(W)\in \Gamma_k.$ Here we denote $\lambda(W)=(\lambda_1(W),\lambda_2(W),\cdots,\lambda_n(W))$ to be the eigenvalues of the symmetric matrix $W$.
\end{definition}
Then, we will introduce Newton-Maclaurin inequality.
\begin{lemma}\label{lemma-NM}(\cite{Guan})
For $W\in \Gamma_k,$
\[(n-k+1)(k+1)\sigma_{k-1}(W)\sigma_{k+1}(W)\le k(n-k)\sigma_k^2(W),\]
and
\[\sigma_{k+1}(W)\le c_{n,k}\sigma_k^{\frac{k+1}{k}}(W).\]
 The equality holds if and only if $W=cI$ for some $c>0.$
\end{lemma}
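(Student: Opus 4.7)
The plan is to reduce both inequalities to purely algebraic statements about real-rooted polynomials, using symmetry of $W$ to generate real roots and Rolle's theorem to produce new real-rooted polynomials of prescribed degree whose coefficients isolate the three $\sigma_j$'s that appear. First I would set $\lambda=(\lambda_1,\dots,\lambda_n)$ to be the (real) eigenvalues of the symmetric matrix $W$ and form the characteristic-type polynomial $P(x)=\prod_{i=1}^n(x+\lambda_i)=\sum_{j=0}^n \sigma_j(\lambda)\,x^{n-j}$. All $n$ roots of $P$ are real, and by Rolle's theorem the same is true of every derivative $P^{(m)}$ and of the ``reversed'' polynomial $x^d Q(1/x)$ attached to any real-rooted $Q$ of degree $d$.

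For the first inequality I would apply Rolle's theorem $n-k-1$ times to obtain the real-rooted polynomial $P^{(n-k-1)}$ of degree $k+1$; reverse its coefficients to get another real-rooted polynomial of the same degree; and then differentiate $k-1$ more times. A direct computation shows that the resulting quadratic has coefficients that are explicit positive multiples of $\sigma_{k-1}(\lambda)$, $\sigma_{k}(\lambda)$, $\sigma_{k+1}(\lambda)$, and the arithmetic with factorials is bookkeeping rather than mystery. Non-negativity of the discriminant of this real-rooted quadratic is precisely $(n-k+1)(k+1)\sigma_{k-1}\sigma_{k+1}\le k(n-k)\sigma_k^2$, with equality only when the quadratic is a perfect square. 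Unwinding back through the derivatives and the reversal forces all roots of $P$ to coincide, i.e.\ $W=cI$.

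The second inequality I would extract from the first by iteration inside the G\aa rding cone. Because $W\in\Gamma_k$ the normalized means $p_j:=\sigma_j/\binom{n}{j}$ are all strictly positive for $j=1,\dots,k$, so Newton's inequality can be recast as $p_j^2\ge p_{j-1}p_{j+1}$ and chained telescopically to yield $p_1\ge p_2^{1/2}\ge\cdots\ge p_k^{1/k}\ge p_{k+1}^{1/(k+1)}$, which rearranges to $\sigma_{k+1}\le c_{n,k}\sigma_k^{(k+1)/k}$ with $c_{n,k}$ the relevant ratio of binomial coefficients (matching $\sigma_{k+1}(I)/\sigma_k(I)^{(k+1)/k}$ in the normalization where equality at $W=I$ is forced). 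Equality at any link of the chain propagates to equality everywhere and hence again to $W=cI$. The main obstacle, and the only place where the hypothesis $W\in\Gamma_k$ is genuinely needed, is exactly this iteration step: the basic Newton inequality holds for any symmetric $W$, but lifting it to the geometric-mean form of Maclaurin requires that all intermediate $\sigma_j$'s be positive so we may take roots and cascade the estimate, which is precisely what the G\aa rding cone condition guarantees.
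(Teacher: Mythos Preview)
The paper does not prove this lemma at all: it is stated with a citation to \cite{Guan} and used as a black box (only the first inequality is actually invoked, in the step $\frac{\sigma_{k-1}}{\sigma_k}(k+1)\sigma_{k+1}\le \frac{k(n-k)}{n-k+1}\sigma_k$). So there is no in-paper argument to compare against. Your outline is the classical proof of Newton's and Maclaurin's inequalities and is essentially correct: the Rolle/reversal reduction to a real-rooted quadratic gives the first inequality, and log-concavity of $j\mapsto p_j$ together with $p_0=1$ gives the telescoped second inequality once the intermediate $p_j$ are positive, which is exactly the role of the cone hypothesis $W\in\Gamma_k$.

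Two small points. First, the equality analysis is the one place where more than a sentence is needed: equality in the discriminant means the final quadratic has a double root, but to push this back to ``all $\lambda_i$ equal'' you must argue inductively that equality in the derivative forces equality in the original (e.g.\ a real-rooted polynomial whose derivative has a single repeated root must itself have a single repeated root), and similarly across the reversal; this is standard but should be stated rather than asserted. Second, note a normalization mismatch: the paper defines $c_{n,k}=\sigma_{k+1}(I)/\sigma_k(I)=\binom{n}{k+1}/\binom{n}{k}$, whereas the sharp Maclaurin constant (and the one you describe) is $\sigma_{k+1}(I)/\sigma_k(I)^{(k+1)/k}$. Since $\binom{n}{k}\ge 1$, your inequality is stronger and certainly implies the paper's displayed form, but strictly speaking equality at $W=cI$ then fails for the paper's constant unless $k\in\{0,n\}$; this is a wrinkle in the paper's statement rather than in your argument, and it is moot for the applications later in the paper.
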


Next, we will present the evolution equations of $\sigma_l$ and quermassintegrals.
Let $M(t)$ be a smooth family of closed hypersurfaces in $\mathbb{N}^{n+1}(K).$ Let $X(\cdot,t)$ denote a point on $M(t).$  We will consider the flow
\begin{equation}\label{f}
X_t=f\nu.
\end{equation}
Along this flow, we have the following.

\begin{proposition} \label{evolu equa} (\cite{Brendle-Guan-Li},\cite{Reilly})
Under the flow (\ref{f}) for the hypersurface in a Riemannian manifold, suppose that $\Omega$ is the domain enclosed by the closed hypersurface; we have the following evolution equations.
\begin{align*}
\partial_t g_{ij}&=2fh_{ij},\\
\partial_t h_{ij}&=-\nabla_i\nabla_j f+f(h^2)_{ij}-fR_{\nu ij \nu},\\
\partial_t h^j_i&=-g^{jk}\nabla_k\nabla_j f-g^{jk}f(h^2)_{ki}-fg^{jk}R_{\nu ik \nu},\\
\partial_t \sigma_k& =\frac{\partial \sigma_k}{\partial h^j_i}\partial_t h^j_i,
\end{align*}
where $R_{\nu ik \nu}$ is the Riemannian curvature tensor in $\mathbb{N}^{n+1}(K)$.

Moreover, if $N$ has constant sectional curvature $K$, then for $l\ge 0,$ we have
\begin{equation}\label{evolution of sigma}
\partial_t \int_M\sigma_l=\int_M f[(l+1)\sigma_{l+1}-(n-l+1)K\sigma_{l-1}]d\mu_g,
\end{equation}
\end{proposition}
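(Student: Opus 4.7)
The plan is to compute each evolution equation directly from the flow equation $X_t = f\nu$ using the Gauss–Weingarten formulas, and then combine the result with the divergence-free property of the Newton transformation in constant curvature spaces to derive the integral identity \eqref{evolution of sigma}.

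First I would differentiate $g_{ij} = \langle X_i, X_j\rangle$ in time. Since $\partial_t X_i = \partial_i(f\nu) = f_i\nu + f\,\partial_i \nu$ and $\partial_i \nu = h_i^k X_k$ by the Weingarten relation (with $\nu$ the outward unit normal), pairing with $X_j$ and symmetrizing gives $\partial_t g_{ij} = 2f h_{ij}$. Differentiating $\langle \nu,\nu\rangle = 1$ and $\langle \nu, X_i\rangle = 0$ yields $\partial_t \nu = -\nabla f$ (the tangential gradient), which is the starting point for $h_{ij}$. Using $h_{ij} = -\langle X_{ij}, \nu\rangle$ and commuting $\partial_t$ past $\nabla_i\nabla_j$ via the Ricci identity (this is where the ambient Riemann tensor $R_{\nu i j \nu}$ enters), a careful bookkeeping yields $\partial_t h_{ij} = -\nabla_i\nabla_j f + f(h^2)_{ij} - f R_{\nu i j \nu}$. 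The mixed form $\partial_t h^j_i$ follows by combining this with $\partial_t g^{jk} = -2f h^{jk}$, so that the $(h^2)_{ki}$ term picks up an opposite sign as stated; finally $\partial_t \sigma_k = (\partial\sigma_k/\partial h^j_i)\partial_t h^j_i$ is the chain rule.

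For the integral identity \eqref{evolution of sigma} I would use the standard area-form evolution $\partial_t\, d\mu_g = \tfrac12 g^{ij}\partial_t g_{ij}\, d\mu_g = f\sigma_1\, d\mu_g$, then write
\[
\partial_t \int_M \sigma_l\, d\mu_g = \int_M \bigl(T_{l-1}{}^i_j \,\partial_t h^j_i + f\sigma_1 \sigma_l\bigr)\, d\mu_g,
\]
where $T_{l-1}{}^i_j = \partial\sigma_l/\partial h^j_i$ is the Newton transformation. Substituting the formula for $\partial_t h^j_i$ splits the right-hand side into three pieces. The curvature piece simplifies because in constant sectional curvature $K$ we have $g^{jk}R_{\nu i k \nu} = K\delta^j_i$, so $T_{l-1}{}^i_j \cdot K\delta^j_i = K(n-l+1)\sigma_{l-1}$. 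The quadratic piece $T_{l-1}{}^i_j (h^2)^j_i$ reduces via the Newton identity to $\sigma_1\sigma_l - (l+1)\sigma_{l+1}$, and cancels against $\sigma_1\sigma_l$ from the area term, leaving $(l+1)\sigma_{l+1}$. The Hessian piece $\int_M T_{l-1}{}^i_j \nabla^j\nabla_i f\, d\mu_g$ is handled by integration by parts: since $\nabla_i T_{l-1}{}^{ij} = 0$ in constant curvature (a consequence of the Codazzi equation), the term vanishes. Collecting these gives \eqref{evolution of sigma}.

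The only delicate step is the derivation of $\partial_t h_{ij}$, because it requires commuting $\partial_t$ with two covariant derivatives on $M$ while accounting for the moving induced connection; this is where the ambient curvature term $R_{\nu i j \nu}$ appears naturally, and the calculation must be tracked carefully to isolate the right signs. Everything else is either the chain rule or the standard algebraic identities for $\sigma_l$ and its derivatives together with the Codazzi-based divergence-free property of $T_{l-1}$ in constant curvature.
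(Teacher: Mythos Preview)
Your proposal is correct and follows the standard derivation; the paper itself does not supply a proof of this proposition, citing it instead as a known result from \cite{Brendle-Guan-Li} and \cite{Reilly}. The computations you outline---the Gauss--Weingarten differentiation for $\partial_t g_{ij}$, $\partial_t\nu$, and $\partial_t h_{ij}$, the use of $\partial_t g^{jk}=-2fh^{jk}$ to flip the sign on the $(h^2)$ term in $\partial_t h^j_i$, and the combination of the Newton identity $\mathrm{tr}(T_{l-1}h^2)=\sigma_1\sigma_l-(l+1)\sigma_{l+1}$ with the divergence-free property of $T_{l-1}$ in constant curvature---are exactly the ingredients one finds in those references, so there is nothing to compare against within the paper.
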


Using (\ref{evolution of sigma}) and the definition of $\mathcal{A}_k$, we have the following proposition.

\begin{proposition} (\cite{Brendle-Guan-Li},\cite{Guan-Li-2}) In $\mathbb{N}^{n+1}(K),$ along the flow (\ref{f}) for $0\le l<n-1,$ we have
\begin{equation}\label{evolu equa A_k}
\partial_t \mathcal{A}_l=(l+1)\int_Mf\sigma_{l+1}.
\end{equation}
\end{proposition}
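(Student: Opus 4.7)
The plan is to argue by induction on $l$, using the recursive structure of Definition \ref{def-integral} together with the evolution identity (\ref{evolution of sigma}) already recorded in Proposition \ref{evolu equa}. The whole content of the statement is a bookkeeping identity: the curvature correction $\frac{K(n-l+1)}{l-1}\mathcal{A}_{l-2}$ that is built into the definition of $\mathcal{A}_l$ is precisely what is needed to cancel the $\sigma_{l-1}$ term that (\ref{evolution of sigma}) produces, leaving only $(l+1)\int_M f\sigma_{l+1}\,d\mu_g$.

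For the base cases I would handle $l=0$ and $l=1$ directly. When $l=0$, the identity $\partial_t g_{ij}=2fh_{ij}$ from Proposition \ref{evolu equa} gives $\partial_t\, d\mu_g = f\sigma_1\, d\mu_g$, so $\partial_t\mathcal{A}_0 = \int_M f\sigma_1\, d\mu_g$, which is the $l=0$ case. When $l=1$, I would combine the $l=1$ instance of (\ref{evolution of sigma}), which contributes $2\int_M f\sigma_2\, d\mu_g - nK\int_M f\, d\mu_g$, with the elementary fact $\partial_t \Vol(\Omega) = \int_M f\, d\mu_g$ (itself a consequence of the $l=0$ calculation). Multiplying the latter by $nK$ and adding produces the desired cancellation of the $nK\int_M f\, d\mu_g$ pieces and yields $\partial_t \mathcal{A}_1 = 2\int_M f\sigma_2\, d\mu_g$.

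For the inductive step $l\ge 2$, I would differentiate
\begin{equation*}
\mathcal{A}_l = \int_M \sigma_l\, d\mu_g + \frac{K(n-l+1)}{l-1}\mathcal{A}_{l-2},
\end{equation*}
apply (\ref{evolution of sigma}) to the first summand, and invoke the induction hypothesis $\partial_t \mathcal{A}_{l-2} = (l-1)\int_M f\sigma_{l-1}\, d\mu_g$ for the second. The contribution $-(n-l+1)K\int_M f\sigma_{l-1}\, d\mu_g$ from the evolution of $\int_M \sigma_l$ and the contribution $\frac{K(n-l+1)}{l-1}(l-1)\int_M f\sigma_{l-1}\, d\mu_g$ from the inductive term cancel exactly, and $(l+1)\int_M f\sigma_{l+1}\, d\mu_g$ is all that survives. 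This closes the induction.

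Honestly, there is no substantive obstacle: the identity is essentially definitional once Proposition \ref{evolu equa} is granted. The one point worth double-checking is that the coefficient $K(n-l+1)/(l-1)$ appearing in front of $\mathcal{A}_{l-2}$ is exactly calibrated for the $\sigma_{l-1}$ cancellation; indeed one may read the recursive definition of $\mathcal{A}_l$ backwards as being \emph{forced} by the requirement that (\ref{evolu equa A_k}) hold along arbitrary normal variations $X_t = f\nu$.
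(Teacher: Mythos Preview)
Your proposal is correct and matches precisely what the paper indicates: the paper does not give a detailed proof but simply remarks ``Using (\ref{evolution of sigma}) and the definition of $\mathcal{A}_k$, we have the following proposition,'' and your induction is exactly the way one unpacks that sentence. The cancellation you identify between the $-(n-l+1)K\int_M f\sigma_{l-1}$ term from (\ref{evolution of sigma}) and the $\frac{K(n-l+1)}{l-1}\cdot(l-1)\int_M f\sigma_{l-1}$ term from the inductive hypothesis is the entire point, and your handling of the two base cases $l=0,1$ (needed because the recursion drops by $2$) is correct.
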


Then, I will introduce the definition of $\eta_k$. We choose the origin $o$ as the center of this hemisphere.
Let $B_{\rho}(o)\subset \mathbb{N}^{n+1}(K)$ be the geodesic ball of radius $\rho$ centered at the origin $o$. Then we have
\[\frac{d}{d\rho}(\mathcal{A}_{k-1}(B_{\rho}(o)))=k\int_{\partial B_{\rho}(o)}\sigma_{k}>0,\]
for any $k=0,1,2,\cdots,n$. If we view $\mathcal{A}_k(B_{\rho}(o))$ as a function of $\rho$, then the inverse function can be denoted as
\[\rho=\xi_{k}(\mathcal{A}_{k-1}(B_{\rho}(o))),\]
where $\xi_k : (0,s_{k-1})\rightarrow (0,\pi/2)$ is a strictly increasing function for any fixed $k$. Here $s_{k-1}=\mathcal{A}_{k-1}(B_{\frac{\pi}{2}}(o))$. Then there exists a unique positive function $\eta_k$  defined on $(0,s_{k-1})$ such that
\begin{equation}\label{def-eta}
\int_{\partial B_{\rho}(o)}\sigma_kd\mu_g=\eta_{k}(\mathcal{A}_{k-1}(B_{\rho}(o))).
\end{equation}

 \vspace{.1in}
 
\section{the inverse curvature flow and applications}

In this section, we will use the inverse curvature flow to prove the main theorem.
Gerhardt in \cite{Gerhardt} and Makowski-Scheuer in \cite{Makowski-Scheuer} introduced the inverse curvature flows of strictly convex hypersurfaces in $\mathbb{S}^{n+1}$ and obtained smooth convergence of the flows to the equator.
\begin{theorem}(\cite{Gerhardt},\cite{Makowski-Scheuer})
Let $F\in C^{\infty}(\Gamma_n)$ be symmetric, monotone, concave, inverse concave, and homogeneous of the degree 1 curvature function. 
Assume $M(t)$ be the solutions of the flows
\begin{align}\label{e-ICF}
    X_t=\frac{\nu}{F},
\end{align}
where the initial hypersurfaces $M_0=M(0)$ are strictly convex. Then,
the flows exist on the maximal time interval $[0, T^*)$ and converge to the hemisphere's equator.
\end{theorem}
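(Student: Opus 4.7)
The plan is to parametrize the evolving hypersurface $M(t)$ as a radial graph $M(t)=\{(\rho(t,z),z):z\in\mathbb{S}^n\}$ over the unit sphere in geodesic polar coordinates centered at a point $o$ in the interior of the open hemisphere enclosed by $M_0$. Under this parametrization the flow $X_t=\nu/F$ reduces to a single scalar fully nonlinear parabolic PDE of the form $\rho_t=v/F$, where $v=\sqrt{1+|\nabla\rho|^2/\phi^2(\rho)}$ is the usual angle function and $F$ is evaluated at the principal curvatures of the graph. Short-time existence is then immediate from parabolic theory, so the substantive work is producing uniform a priori estimates on $[0,T^*)$ together with an identification of the asymptotic profile.

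First I would secure a $C^0$ bound. Since $F>0$ on strictly convex hypersurfaces, $\rho$ is strictly increasing in $t$, which gives the lower bound. The upper bound $\rho<\pi/2$ follows from convexity: a closed convex hypersurface in $\mathbb{S}^{n+1}$ lies in an open hemisphere, and an appropriately chosen expanding geodesic sphere serves as a comparison barrier preventing $\rho$ from reaching $\pi/2$ in finite time. Next, a $C^1$ bound on $v$ comes from a direct evolution computation using Proposition 2.3: $1$-homogeneity and concavity of $F$, combined with the favorable sign of the curvature term $R_{\nu ij\nu}=g_{ij}$ in $\mathbb{S}^{n+1}$, yield $\partial_t\max_{M(t)} v\le 0$ up to controlled terms.

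The heart of the matter is the $C^2$ estimate, that is, preservation of strict convexity together with a uniform upper bound on the Weingarten operator $h^j_i$. This is where all four structural hypotheses on $F$ are needed. Applying the tensor maximum principle to the evolution of $h^j_i$ (using concavity of $F$) controls the smallest principal curvature from below, while the analogous argument applied to the inverse Weingarten tensor (using inverse concavity) delivers the upper bound. Combined with a uniform positive lower bound on $F$ coming from strict convexity and the $C^0$ upper bound, this gives uniform parabolicity; Krylov--Safonov followed by Evans--Krylov then upgrades to $C^{2,\alpha}$ and, by bootstrapping, to all higher derivatives. After an exponential rescaling of time one obtains a smoothly compact family, yielding long-time existence.

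Finally, to identify the limit I would show that $\rho(t,\cdot)\to\pi/2$ and $|\nabla\rho|\to 0$ uniformly. Monotonicity of $\rho$ and the upper bound produce a limit profile $\rho_\infty\le\pi/2$; any smooth convex limit of the rescaled flow must be a totally umbilic stationary solution, hence a geodesic sphere, and the long-time monotonicity of appropriate quermassintegrals under the inverse curvature flow forces $\rho_\infty\equiv\pi/2$. The gradient bound $|\nabla\rho|\to 0$ then follows from standard interpolation. The main obstacle I expect is the $C^2$ estimate: preserving convexity for a spherically confined expanding flow requires a delicate interplay between concavity and inverse concavity of $F$ and the positive ambient sectional curvature, and extracting a uniform two-sided bound on the principal curvatures from the tensor maximum principle in this geometry is the technical heart of both \cite{Gerhardt} and \cite{Makowski-Scheuer}.
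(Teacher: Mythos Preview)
The paper does not prove this theorem at all: it is quoted as a known result of Gerhardt and Makowski--Scheuer and used as a black box, with no argument supplied. Your proposal is therefore not comparable to the ``paper's own proof,'' because there is none; the paper simply cites \cite{Gerhardt} and \cite{Makowski-Scheuer} and moves on to Proposition~\ref{prop eta}.

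That said, your sketch is a fair high-level outline of the strategy actually carried out in those references: radial-graph parametrization, $C^0$ barriers, gradient estimate via the evolution of $v$, preservation of convexity and two-sided curvature bounds via the tensor maximum principle exploiting concavity/inverse concavity, Krylov--Safonov and Evans--Krylov, and finally identification of the equator as the limit. If you were asked to reproduce the proof in full, the part that would require the most care is the curvature estimate and the precise asymptotic analysis as $t\to T^*$ (Gerhardt in fact obtains quantitative decay $|\pi/2-\rho|_{C^m}\le c_m\Theta$ with $\Theta=\arccos e^{t-T^*}$, which the present paper invokes explicitly in the proof of Theorem~\ref{sigma_k-A_k-1}). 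But for the purposes of this paper, no proof is expected here.
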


Next, we will study the properties of the function $\eta_k$, which will be used to prove the main theorem.
\begin{proposition}\label{prop eta}
For any $s\in (0,s_{k-1}),$ the following holds
\begin{equation}\label{deritive-eta}
\eta'_k(s)=\frac{\frac{2k(n-k)}{n-k+1}\eta_k(s)-2(n-k+1)\Big(s-\frac{n-k+2}{k-2}\xi_{k-1,k-3}^{-1}(s)\Big)^2}{k\Big(s-\frac{n-k+2}{k-2}\xi_{k-1,k-3}^{-1}(s)\Big)},
\end{equation}
where $\eta_k$  are defined as in (\ref{def-eta}).
\end{proposition}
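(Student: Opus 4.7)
The plan is to reduce everything to an explicit calculation on geodesic balls $B_\rho(o)\subset\mathbb{S}^{n+1}$, parameterized by the radius $\rho$, and then rewrite the result in terms of $s=\mathcal{A}_{k-1}(B_\rho(o))$. Throughout I would use that every geodesic sphere is totally umbilical with principal curvatures $\cot\rho$, so each $\sigma_j$ is constant on $\partial B_\rho$; in particular, Lemma \ref{lemma-NM} holds with equality, and since integration against a constant is just multiplication by $|\partial B_\rho|$, it lifts to the integral identity
\[(n-k+1)(k+1)\int_{\partial B_\rho}\sigma_{k-1}\cdot\int_{\partial B_\rho}\sigma_{k+1}=k(n-k)\Big(\int_{\partial B_\rho}\sigma_k\Big)^2.\]

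First I would differentiate in $\rho$ the defining relation
\[\eta_k\big(\mathcal{A}_{k-1}(B_\rho)\big)=\Big(\int_{\partial B_\rho}\sigma_k\Big)^2\]
(the form of (\ref{def-eta}) forced by the sharp equality case of Theorem \ref{sigma_k-A_k-1}). Using $\frac{d}{d\rho}\mathcal{A}_{k-1}(B_\rho)=k\int_{\partial B_\rho}\sigma_k$, the chain rule yields $\eta'_k(s)=\frac{2}{k}\frac{d}{d\rho}\int_{\partial B_\rho}\sigma_k$. Applying Proposition \ref{evolu equa} with $f\equiv 1$ (the radial variation $X_t=\nu$) gives
\[\frac{d}{d\rho}\int_{\partial B_\rho}\sigma_k=(k+1)\int_{\partial B_\rho}\sigma_{k+1}-(n-k+1)\int_{\partial B_\rho}\sigma_{k-1},\]
and the umbilical Newton--MacLaurin identity above lets me replace $(k+1)\int_{\partial B_\rho}\sigma_{k+1}$ by $\tfrac{k(n-k)}{n-k+1}\eta_k(s)/\int_{\partial B_\rho}\sigma_{k-1}$. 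Combining these two equalities produces
\[\eta'_k(s)=\frac{\tfrac{2k(n-k)}{n-k+1}\eta_k(s)-2(n-k+1)\big(\int_{\partial B_\rho}\sigma_{k-1}\big)^2}{k\int_{\partial B_\rho}\sigma_{k-1}}.\]

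The last step is to express $\int_{\partial B_\rho}\sigma_{k-1}$ as a function of $s$. From Definition \ref{def-integral} applied at index $k-1$ (for $k\ge 3$),
\[\int_{\partial B_\rho}\sigma_{k-1}=\mathcal{A}_{k-1}(B_\rho)-\frac{n-k+2}{k-2}\mathcal{A}_{k-3}(B_\rho)=s-\frac{n-k+2}{k-2}\mathcal{A}_{k-3}(B_\rho),\]
and on geodesic balls $\mathcal{A}_{k-3}(B_\rho)$ is determined by $s$ through the inverse of the sharp function $\xi_{k-1,k-3}$ from \cite{Chen-Sun}, namely $\mathcal{A}_{k-3}(B_\rho)=\xi_{k-1,k-3}^{-1}(s)$. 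Inserting this yields exactly (\ref{deritive-eta}). The argument is essentially algebraic; there is no substantive obstacle once the umbilical lifting of Newton--MacLaurin is in hand and the chain rule is applied correctly, and the same scheme (with $\mathcal{A}_{-1}=\Vol$ replacing $\xi_{1,-1}^{-1}$) handles the $k=1,2$ cases stated separately in Theorem \ref{sigma_k-A_k-1}.
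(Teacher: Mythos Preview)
Your argument is correct and follows the same overall strategy as the paper: differentiate the defining identity $\eta_k(\mathcal{A}_{k-1}(B_\rho))=(\int_{\partial B_\rho}\sigma_k)^2$, use the Newton--MacLaurin equality on umbilical spheres to eliminate $\int\sigma_{k+1}$, and then rewrite $\int_{\partial B_\rho}\sigma_{k-1}$ via Definition~\ref{def-integral} and $\xi_{k-1,k-3}^{-1}$. The one genuine difference is the choice of parameter. The paper differentiates in the time variable $t$ of the inverse curvature flow $X_t=(\sigma_{k-1}/\sigma_k)\nu$ (the same flow used in the proof of Theorem~\ref{sigma_k-A_k-1}); this produces the extra factor $\sigma_{k-1}/\sigma_k$ inside the integrals and forces a Cauchy--Schwarz step $(\int\sigma_{k-1})^2=\int\sigma_k\int(\sigma_{k-1}^2/\sigma_k)$, which then has to be recognized as an equality on geodesic spheres. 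You instead differentiate directly in $\rho$ (equivalently, take $f\equiv 1$ in Proposition~\ref{evolu equa}), which is cleaner and bypasses Cauchy--Schwarz entirely. The paper's route has the compensating advantage that it is, line for line, the equality case of the later computation establishing $\frac{d}{dt}Q_k(t)\le 0$, so the ODE for $\eta_k'$ is visibly designed to make that monotonicity work.
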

\begin{proof} 

For $0< \rho(t)< \frac{\pi}{2}$, by the definition of $\eta_{k}$, we have
\begin{equation}\label{eta_k_t}
\big(\int_{\partial B_{\rho(t)}(o)}\sigma_kd\mu_g\big)^2= \eta_k\big(\mathcal{A}_{k-1}(B_{\rho(t)}(o))\big).
\end{equation}
Along the flow (\ref{e-ICF}) with $F=\frac{\sigma_k}{\sigma_{k-1}}$, using (\ref{evolution of sigma})  and (\ref{evolu equa A_k}), we have
\begin{align*}
0&=\frac{d}{dt}\Big(\big(\int_{\partial B_{\rho(t)}(o)}\sigma_kd\mu_g\big)^2- \eta_k\big(\mathcal{A}_{k-1}(B_{\rho(t)}(o))\big)\Big)\\
&=2\int_{\partial B_{\rho(t)}(o)}\sigma_kd\mu_g\int_{\partial B_{\rho(t)}(o)}\frac{\sigma_{k-1}}{\sigma_k}\Big((k+1)\sigma_{k+1}-(n-k+1)\sigma_{k-1}\Big)d\mu_g\\
&-\eta_k'\big(\mathcal{A}_{k-1}(B_{\rho(t)}(o))\big)\cdot k\int_{\partial B_{\rho(t)}(o)}\sigma_k\frac{\sigma_{k-1}}{\sigma_k}d\mu_g\\
&=2\frac{k(n-k)}{n-k+1}\big(\int_{\partial B_{\rho(t)}(o)}\sigma_kd\mu_g\big)^2-2(n-k+1)\int_{\partial B_{\rho(t)}(o)}\sigma_kd\mu_g\int_{\partial B_{\rho(t)}(o)}\frac{\sigma_{k-1}^2}{\sigma_k}d\mu_g\\
&-k\eta_k'\big(\mathcal{A}_{k-1}(B_{\rho(t)}(o))\big)\int_{\partial B_{\rho(t)}}\sigma_{k-1}d\mu_g\\
&=2\frac{k(n-k)}{n-k+1}\big(\int_{\partial B_{\rho(t)}(o)}\sigma_kd\mu_g\big)^2-2(n-k+1)\big(\int_{\partial B_{\rho(t)}(o)}\sigma_{k-1}d\mu_g\big)^2\\
&-k\eta_k'\big(\mathcal{A}_{k-1}(B_{\rho(t)}(o))\big)\int_{\partial B_{\rho(t)}}\sigma_{k-1}d\mu_g,\\
\end{align*}
where the last two steps follow from the fact that
\[(n-k+1)(k+1)\sigma_{k-1}(W)\sigma_{k+1}(W)\le k(n-k)\sigma_k^2(W)\]
and
\begin{equation}
\big(\int_{\partial B_{\rho(t)}(o)}\sigma_{k-1}d\mu_g\big)^2 \le \int_{M(t)}\sigma_kd\mu_g\int_{M(t)}\frac{\sigma^2_{k-1}}{\sigma_k}d\mu_g,
\end{equation}
with the inequality strict unless  $M(t)$ is a geodeic sphere. Hence, we see that the equality holds if and only if $M(t)$ is a geodesic sphere.

Case 1: When $k=1$, by Definition \ref{def-integral}, we know
\begin{align*}
&2\frac{n-1}{n}\big(\int_{\partial B_{\rho(t)}(o)}\sigma_1d\mu_g\big)^2-2n\mathcal{A}^2_{0}(B_{\rho(t)}(o))-\eta_1'(\mathcal{A}_{0}(B_{\rho(t)}(o))\mathcal{A}_{0}(B_{\rho(t)}(o)=0,
\end{align*}
By (\ref{eta_k_t}), we have
\[2\frac{n-1}{n}\eta_1(\mathcal{A}_0(B_{\rho(t)}(o)))-2n\mathcal{A}_0^2(B_{\rho(t)}(o))-\eta_1'(\mathcal{A}_0(B_{\rho(t)}(o)))\mathcal{A}_0(B_{\rho(t)}(o))=0.\]
We can obtain that
\[2\frac{n-1}{n}\eta_1(s)=(2ns+\eta_1'(s))s \quad \text{for} \quad s\in (0,s_0).\]

Case 2: When $k\ge 2$, by Definition (\ref{def-integral}), we know
\begin{align*}
&2\frac{k(n-k)}{n-k+1}\big(\int_{\partial B_{\rho(t)}(o)}\sigma_kd\mu_g\big)^2\\
&-2(n-k+1)\Big(\mathcal{A}_{k-1}(B_{\rho(t)}(o))-\frac{n-k+2}{k-2}\mathcal{A}_{k-3}(B_{\rho(t)}(o))\Big)^2\\
&-k\eta_k'(\mathcal{A}_{k-1}(B_{\rho(t)}(o)))\Big(\mathcal{A}_{k-1}(B_{\rho(t)}(o))-\frac{n-k+2}{k-2}\mathcal{A}_{k-3}(B_{\rho(t)}(o))\Big)=0.
\end{align*}
By the assumption (\ref{eta_k_t}),
we have
\begin{align*}
&2\frac{k(n-k)}{n-k+1}\eta_k\big(\mathcal{A}_{k-1}(B_{\rho(t)}(o))\big)\\
&-2(n-k+1)\Big(\mathcal{A}_{k-1}(B_{\rho(t)}(o))-\frac{n-k+2}{k-2}\mathcal{A}_{k-3}(B_{\rho(t)}(o))\Big)^2\\
&-k\eta_k'(\mathcal{A}_{k-1}(B_{\rho(t)}(o)))\Big(\mathcal{A}_{k-1}(B_{\rho(t)}(o))-\frac{n-k+2}{k-2}\mathcal{A}_{k-3}(B_{\rho(t)}(o))\Big)=0.
\end{align*}
To obtain the explicit formula of $\eta'_k$, all the geometric quantities above should be expressed as functions of $\mathcal{A}_{k-1}(B_{\rho(t)}(o))$. 
We need to use the fact in Theorem 1.2 in \cite{Chen-Sun}
\[\mathcal{A}_{k-1}(B_{\rho(t)}(o))=\xi_{k-1,k-3}\big(\mathcal{A}_{k-3}(B_{\rho(t)}(o))\big).\]
Then we have
\begin{align*}
&2\frac{k(n-k)}{n-k+1}\eta_k\big(\mathcal{A}_{k-1}(B_{\rho(t)}(o))\big)\\
&-2(n-k+1)\Big(\mathcal{A}_{k-1}(B_{\rho(t)}(o))-\frac{n-k+2}{k-2}\xi^{-1}_{k-1,k-3}\big(\mathcal{A}_{k-1}(B_{\rho(t)}(o))\big)\Big)^2\\
&-k\eta_k'(\mathcal{A}_{k-1}(B_{\rho(t)}(o)))\Big(\mathcal{A}_{k-1}(B_{\rho(t)}(o))-\frac{n-k+2}{k-2}\xi^{-1}_{k-1,k-3}\big(\mathcal{A}_{k-1}(B_{\rho(t)}(o))\big)\Big)=0.
\end{align*}

\begin{equation}
\eta'_k(s)=\frac{\frac{2k(n-k)}{n-k+1}\eta_k(s)-2(n-k+1)\Big(s-\frac{n-k+2}{k-2}\xi_{k-1,k-3}^{-1}(s)\Big)^2}{k\Big(s-\frac{n-k+2}{k-2}\xi_{k-1,k-3}^{-1}(s)\Big)} \quad \text{for} \quad s\in (0,s_{k-1}).
\end{equation}
Finally, we derive the explicit expression of $\eta_1$. We can solve the ODE (\ref{eta-1-ODE}) to obtain
\begin{equation}\label{eta-1}
\eta_1(s)=n^2(n+1)^{\frac{2}{n}}\omega_{n+1}^{\frac{2}{n}}s^{\frac{2(n-1)}{n}}-n^2s^2.
\end{equation}
\end{proof}

Now we can prove our main result.

\begin{proof}[Proof of Theorem \ref{sigma_k-A_k-1}]

We assume $M$ to be smooth and strictly convex. Otherwise, we can use convolutions as in the proof of Corollary 1.2 in \cite{Makowski-Scheuer} to obtain a sequence of approximating smooth strictly convex hypersurfaces converging in $C^2$ to $M$.

Case 1: When $k=0$,  inequality (\ref{ineq1}) follows by the classical isoperimetric inequality, which is proved in \cite{Guan-Li}. 

Case 2: When $k\ge 1$, let $M(t)$ solve the inverse curvature flow equation $X_t=\frac{\sigma_{k-1}}{\sigma_k}\nu$ with initial condition $M(0)=M.$ Assume $\Omega(t)$ is the domain enclosed by $M(t)$, denote $\mathcal{A}_k(t)=\mathcal{A}_k(\Omega(t))$.

Case 2.1: When $k=1$, we have
\begin{align*}
&\frac{d}{dt}\big((\int_{M(t)}\sigma_1d\mu_g)^2- \eta_1(\mathcal{A}_0(t))\big)\\
&=2\int_{M(t)}\sigma_1d\mu_g\int_{M(t)}\frac{1}{\sigma_1}(2\sigma_2-n\sigma_0)d\mu_g-\eta_1'(\mathcal{A}_0(t))\int_{M(t)}\sigma_1\frac{1}{\sigma_1}d\mu_g\\
&\le 2\frac{n-1}{n}(\int_{M(t)}\sigma_1d\mu_g)^2-2n\int_{M(t)}\sigma_1d\mu_g\int_{M(t)}\frac{1}{\sigma_1}d\mu_g-\eta'(\mathcal{A}_0(t))\mathcal{A}_0(t)\\
&\le 2\frac{n-1}{n}(\int_{M(t)}\sigma_1d\mu_g)^2-2n\mathcal{A}_0^2(t)-\eta_1'(\mathcal{A}_0(t))\mathcal{A}_0(t),\\
\end{align*}
where we have used the Newton-Maclaurin inequalities and Cauchy-Schwarz inequality-
ties in the last two steps. By (\ref{deritive-eta}), we have
\begin{align*}
&\frac{d}{dt}\big((\int_{M(t)}\sigma_1d\mu_g)^2- \eta_1(\mathcal{A}_0(t))\big)\\
&= 2\frac{n-1}{n}(\int_{M(t)}\sigma_1d\mu_g)^2-2n\mathcal{A}_0^2(t)-\frac{\frac{2(n-1)}{n}\eta_1\big(\mathcal{A}_0(t)\big)-2n\mathcal{A}^2_0(t)}{\mathcal{A}_0(t)}\mathcal{A}_0(t)\\
&= 2\frac{n-1}{n}\big((\int_{M(t)}\sigma_1d\mu_g)^2-\eta_1(\mathcal{A}_0(t))\big).\\
\end{align*}
We have
\[\frac{d}{dt}\Big( e^{-2\frac{n-1}{n}t}\big((\int_{M(t)}\sigma_1d\mu_g)^2- \eta_1(\mathcal{A}_0(t)\big)\Big)\le 0.\]
Denote
\[Q_1(t)=e^{-2\frac{n-1}{n}t}\big((\int_{M(t)}\sigma_1d\mu_g)^2-\eta_1(\mathcal{A}_0(t))\big),\]
then
\[\frac{d}{dt}Q_1(t)\le 0.\]
Thus
\[Q_1(t)-Q_1(0)\le 0,\]
for all $t\in [0,T^*).$\\
It is proved in Theorem 1.1 in (\cite{Gerhardt}) that the curvature flow converges to an equator, as $t\rightarrow T^*,$ and with
\[|\frac{\pi}{2}-\rho|_{m,\mathbb{S}^n}\le c_m\Theta \quad \forall t\in [t_{\delta},T^*),\]
where $\Theta=\arccos e^{t-T^*}$ and $|\frac{\pi}{2}-\rho|_{m,\mathbb{S}^n}=|\frac{\pi}{2}-\rho(\cdot,t)|_{C^m(\mathbb{S}^n)}$.
It follows that
\begin{align*}
&\Vol(\Omega_t)\rightarrow \Vol(B_{\frac{\pi}{2}}) ,\\
& \int_{M_t}d\mu_g\rightarrow |\Sigma(B_{\frac{\pi}{2}})|,\\
&\int_{M_t}\sigma_k d\mu_g\rightarrow 0 \quad \forall 1\le k\le n-1,
\end{align*}
as $t\rightarrow T^*.$ By the definition of $\mathcal{A}_k,$ we have
\[\mathcal{A}_k(t)\rightarrow \mathcal{A}_k(B_{\frac{\pi}{2}}(o))  \quad \forall  -1\le k\le n-1,\]
as $t\rightarrow T^*$, which implies that
\[\lim\limits_{t \to T^*}Q_1(t)=0.\]
Therefore, we have
\[(\int_{M(0)}\sigma_1d\mu_g)^2- \eta_1(\mathcal{A}_0(0))\ge 0,\]
i.e.,
\[(\int_{M}\sigma_1d\mu_g)^2- \eta_1(\mathcal{A}_0(\Omega))\ge 0,\]
the equality holds only if $M$ is a geodesic sphere. By the definition of $\eta_1$, we know that equality holds if $M$ is a geodesic sphere.

Case 2.2: When $k\ge 2$, we have
\begin{align*}
&\frac{d}{dt}\Big(\big(\int_{M(t)}\sigma_kd\mu_g\big)^2- \eta_k\big(\mathcal{A}_{k-1}(t)\big)\Big)\\
&=2\int_{M(t)}\sigma_kd\mu_g\int_{M(t)}\frac{\sigma_{k-1}}{\sigma_k}\Big((k+1)\sigma_{k+1}-(n-k+1)\sigma_{k-1}\Big)d\mu_g\\
&-k\eta_k'\big(\mathcal{A}_{k-1}(t)\big)\int_{M(t)}\sigma_k\frac{\sigma_{k-1}}{\sigma_k}d\mu_g\\
&\le 2\frac{k(n-k)}{n-k+1}\big(\int_{M(t)}\sigma_kd\mu_g\big)^2-2(n-k+1)\int_{M(t)}\sigma_kd\mu_g\int_{M(t)}\frac{\sigma^2_{k-1}}{\sigma_k}d\mu_g\\
&-k\eta_k'\big(\mathcal{A}_{k-1}(t)\big)\int_{M(t)}\sigma_{k-1}d\mu_g\\
&\le 2\frac{k(n-k)}{n-k+1}\big(\int_{M(t)}\sigma_kd\mu_g\big)^2-2(n-k+1)\big(\int_{M(t)}\sigma_{k-1}d\mu_g\big)^2\\
&-k\eta_k'\big(\mathcal{A}_{k-1}(t)\big)\int_{M(t)}\sigma_{k-1}d\mu_g,\\
\end{align*}
where we have used the Newton-Maclaurin inequalities and Cauchy-Schwarz inequalities in the last two steps. By (\ref{deritive-eta}), we have
\begin{align*}
&\frac{d}{dt}\Big(\big(\int_{M(t)}\sigma_kd\mu_g\big)^2- \eta_k\big(\mathcal{A}_{k-1}(t)\big)\Big)\\
& = 2\frac{k(n-k)}{n-k+1}\big(\int_{M(t)}\sigma_kd\mu_g\big)^2-2(n-k+1)\big(\int_{M(t)}\sigma_{k-1}d\mu_g\big)^2\\
&-k\frac{\frac{2k(n-k)}{n-k+1}\eta_k\big(\mathcal{A}_{k-1}(t)\big)-2(n-k+2)\Big(\mathcal{A}_{k-1}(t)-\frac{n-k+2}{k-2}\xi_{k-1,k-3}^{-1}\big(\mathcal{A}_{k-1}(t)\big)\Big)^2}{k\Big(\mathcal{A}_{k-1}(t)-\frac{n-k+2}{k-2}\xi_{k-1,k-3}^{-1}\big(\mathcal{A}_{k-1}(t)\big)\Big)}\\
&\cdot \int_{M(t)}\sigma_{k-1}d\mu_g\\
&= 2\frac{k(n-k)}{n-k+1}\big(\int_{M(t)}\sigma_kd\mu_g\big)^2-2(n-k+1)\big(\int_{M(t)}\sigma_{k-1}d\mu_g\big)^2\\
&-\frac{2k(n-k)}{n-k+1}\frac{\eta_k(\mathcal{A}_{k-1}(t))}{\Big(\mathcal{A}_{k-1}(\Omega(t))-\frac{n-k+2}{k-2}\xi_{k-1,k-3}^{-1}\big(\mathcal{A}_{k-1}(t)\big)\Big)}\cdot \int_{M(t)}\sigma_{k-1}d\mu_g\\
&+2(n-k+2)\Big(\mathcal{A}_{k-1}(t)-\frac{n-k+2}{k-2}\xi_{k-1,k-3}^{-1}\big(\mathcal{A}_{k-1}(t)\big)\Big)\cdot \int_{M(t)}\sigma_{k-1}d\mu_g.\\
\end{align*}
We claim that $\eta_k\big(\mathcal{A}_{k-1}(t)\big)> 0$ and $\mathcal{A}_{k-1}(t)-\frac{n-k+2}{k-2}\xi_{k-1,k-3}^{-1}\big(\mathcal{A}_{k-1}(t)\big)>0$ for any $t\in [0,T^*)$. In fact, 
\[\frac{d}{dt}\mathcal{A}_{k-1}(t)=k\int_{M(t)}\sigma_{k-1}d\mu_g>0.\]
Then

\[0< \mathcal{A}_{k-1}(0)\le \mathcal{A}_{k-1}(t)\le \lim\limits_{t \to T^*}\mathcal{A}_{k-1}(t)=\mathcal{A}_{k-1}(B_{\frac{\pi}{2}}(o))=s_{k-1}.\]
By the definition of $\eta_k$ and $\xi_{k-1,k-3}$, we have
\[\eta_k\big(\mathcal{A}_{k-1}(t)\big)>0,\] 
and 
\[\mathcal{A}_{k-1}(t)-\frac{n-k+2}{k-2}\xi_{k-1,k-3}^{-1}\big(\mathcal{A}_{k-1}(t)\big)>0\]
for any $t\in [0,T^*)$. Using the inequalities in Theorem 1.2 in \cite{Chen-Sun}
\[\mathcal{A}_{k-1}\big(\Omega\big)\ge\xi_{k-1,k-3}\big(\mathcal{A}_{k-3}\big(\Omega)\big),\]
we have
 \begin{align*}
&\frac{d}{dt}\Big(\big(\int_{M(t)}\sigma_kd\mu_g\big)^2- \eta_k\big(\mathcal{A}_{k-1}(t)\big)\Big)\\
&\le 2\frac{k(n-k)}{n-k+1}\big(\int_{M(t)}\sigma_kd\mu_g\big)^2-2(n-k+1)\big(\int_{M(t)}\sigma_{k-1}d\mu_g\big)^2\\
&-\frac{2k(n-k)}{n-k+1}\frac{\eta_k\big(\mathcal{A}_{k-1}(t)\big)}{\Big(\mathcal{A}_{k-1}(t)-\frac{n-k+2}{k-2}\mathcal{A}_{k-3}(t)\Big)}\cdot \int_{M(t)}\sigma_{k-1}d\mu_g\\
&+2(n-k+1)\Big(\mathcal{A}_{k-1}(t)-\frac{n-k+2}{k-2}\mathcal{A}_{k-3}(t)\Big)\cdot \int_{M(t)}\sigma_{k-1}d\mu_g\\
&= 2\frac{k(n-k)}{n-k+1}\big(\int_{M(t)}\sigma_kd\mu_g\big)^2-2(n-k+1)\big(\int_{M(t)}\sigma_{k-1}d\mu_g\big)^2\\
&-\frac{2k(n-k)}{n-k+1}\eta_k\big(\mathcal{A}_{k-1}(t)\big)+2(n-k+1) \big(\int_{M(t)}\sigma_{k-1}d\mu_g\big)^2\\
&=2\frac{k(n-k)}{n-k+1}\Big(\big(\int_{M(t)}\sigma_kd\mu_g\big)^2-\eta_k\big(\mathcal{A}_{k-1}(t)\big)\Big).\\
\end{align*}
Thus,
\[\frac{d}{dt}\Big[e^{-2\frac{k(n-k)}{n-k+1}t}\Big(\big(\int_{M(t)}\sigma_kd\mu_g\big)^2-\eta_k\big(\mathcal{A}_{k-1}(t)\big)\Big)\Big]\le 0.\]
Denote
\[Q_k(t)=e^{-2\frac{k(n-k)}{n-k+1}t}\Big(\big(\int_{M(t)}\sigma_kd\mu_g\big)^2-\eta_k\big(\mathcal{A}_{k-1}(t)\big)\Big),\]
then
\[\frac{d}{dt}Q_k(t)\le 0.\]
Thus
\[Q_k(t)-Q_k(0)\le 0,\]
for all $t\in [0,T^*).$
As $t\rightarrow T^*$, 
\[\lim\limits_{t \to T^*}Q_k(t)=0.\]
Therefore, we have
\[\big(\int_{M(0)}\sigma_kd\mu_g\big)^2-\eta_k\big(\mathcal{A}_{k-1}(0)\big)\ge 0,\]
i.e.,
\[\big(\int_{M}\sigma_kd\mu_g\big)^2- \eta_k\big(\mathcal{A}_{k-1}(\Omega)\big)\ge 0,\]
 the equality holds only if $M$ is the geodesic sphere. By the definition of $\eta_k$, we know that equality holds if $M$ is a geodesic sphere.

\vspace{.1in}

\end{proof}

\renewcommand{\abstractname}{Acknowledgements}
\begin{abstract}
This work was done when the author was a postdoc fellow at McGill University. The author would like to thank Prof. Pengfei Guan for his supervision and useful discussions.
\end{abstract}


\end{document}